\definecolor{Gray}{gray}{0.9}
\newtheorem{thm}{Theorem}[section]
\newtheorem{prop}[thm]{Proposition}
\newtheorem{coro}[thm]{Corollary}
\theoremstyle{definition}
\newtheorem{defi}[thm]{Definition}
\newtheorem{exam}[thm]{Example}
\theoremstyle{remark}
\newtheorem{rema}[thm]{Remark}
\newcommand \acknowledgements{\section*{Acknowledgements}}
\DeclareMathOperator{\DASEP}{DASEP}
\DeclareMathOperator{\CBP}{CBP}
\DeclareMathOperator{\RRG}{RRG}
\newcommand{\Q}{\mathbb Q}
\newcommand{\Z}{\mathbb Z}
\title[The doubly asymmetric simple exclusion process]{The doubly asymmetric simple exclusion process, the colored Boolean process, and the restricted random growth model}
\author{Yuhan Jiang}
\address{Harvard University\\ 
Dept. of Mathematics\\
1 Oxford St\\
Cambridge\\
MA 02138 (USA)}
\email{yjiang@math.harvard.edu}
\keywords{Asymmetric exclusion process, Markov chain}
\subjclass{05E99, 60J10}
\begin{document}

\begin{abstract}
The multispecies asymmetric simple exclusion process (mASEP) is a Markov chain in which particles of different species hop along a one-dimensional lattice. 
This paper studies the doubly asymmetric simple exclusion process $\DASEP(n,p,q)$ in which $q$ particles with species $1, \dots, p$ hop along a circular lattice with n sites, but also the particles are allowed to spontaneously change from one species to another. 
In this paper, we introduce two related Markov chains called the colored Boolean process and the restricted random growth model, and we show that the DASEP lumps to the colored Boolean process, and the colored Boolean process lumps to the restricted random growth model. 
This allows us to generalize a theorem of David Ash on the relations between sums of steady state probabilities. 
We also give explicit formulas for the stationary distribution of $\DASEP(n,2,2)$.
\end{abstract}

\maketitle

\section{Introduction}

The asymmetric simple exclusion process (ASEP) is a Markov chain for particles hopping on a one-dimensional lattice such that each site contains at most one particle. 
The ASEP was introduced independently in biology by Macdonald-Gibbs-Pipkin \cite{MGP}, and in mathematics by Spitzer \cite{Spitzer1970InteractionOM}. 
There are many versions of the ASEP: the lattice can have open boundaries, or be a ring, not necessarily finite (see Liggett \cite{Liggett1975ErgodicTF},\cite{Liggett1985InteractingParticleSystems}).
Particles can have different species, and this variation is called the multispecies ASEP (mASEP). 
The asymmetry can be partial, so that particles are allowed to hop both left and right, but one side is $t$ times more probable, and this is called the partially asymmetric exclusion process (PASEP). 
The ASEP is closely related to a growth model defined by Kardar-Parizi-Zhang \cite{Kardar1986DynamicSO}, and various methods have been invented to study the ASEP, such as the matrix ansatz introduced by Derrida et al. in \cite{Derrida_1993}. 
The combinatorics of the ASEP was studied by many people, see \cite{Duchi2004ACA}\cite{Blythe2004DyckPM}\cite{Brak2012SimpleAE}\cite{Martin2020}\cite{Corteel2018FromMQ}.

A partition $\lambda$ is a weakly decreasing sequence of $n$ nonnegative integers $\lambda = (\lambda_1 \geq \lambda_2 \geq \dots \geq \lambda_n \geq 0)$.
We denote the sum of all parts by $|\lambda| = \lambda_1 + \cdots + \lambda_n$.
We will write a partition as an $n$-tuple $\lambda = (\lambda_1, \lambda_2, \dots, \lambda_n)$.
Let $m_i = m_i(\lambda)$ be the number of parts of $\lambda$ that equal $i$. 
As in \cite[Section 7.2]{stanley_1999}, we also denote a partition by $\lambda = \langle 1^{m_1}2^{m_2}\cdots \rangle$. 
Let $\ell(\lambda)$ denote the number of nonzero (positive) parts of $\lambda$, or the length of $\lambda$. 
We have $\ell(\lambda) = \sum_i m_i(\lambda)$.
We write $S_n(\lambda)$ as the set of all permutations of $(\lambda_1, \dots, \lambda_n)$.
The mASEP can be thought of a Markov chain on $S_n(\lambda)$.

Let $n$ be the number of positions on the lattice, $p$ be number of types of species, and $q$ be the number of particles.
David Ash~\cite{ash2023introducing} defined the \emph{doubly asymmetric simple exclusion process} $\DASEP(n,p,q)$. 
The DASEP is a variant of the mASEP but also allows particles to change from one species to another. If $p=1$, $\DASEP(n,1,q)$ is the usual 1-species PASEP on a ring.

\begin{defi}\cite{ash2023introducing}
For all positive integers $n,p,q$ with $n > q$, the \emph{doubly asymmetric simple exclusion process} $\DASEP(n,p,q)$ is a Markov process on the following set
$$\Gamma^{p,q}_n = \bigcup_{\substack{\lambda_1 \leq p, \\ \ell(\lambda) = q}} S_n(\lambda).$$
Let $0 \le t,u \le 1$ be constants.
The transition probability $P(\mu,\nu)$ on two permutations $\mu$ and $\nu$ is as follows:
\begin{itemize}
    \item If $\mu = (\mu_1, \dots, \mu_k, i, j, \mu_{k+2}, \dots, \mu_n)$ and $\nu = (\mu_1, \dots, \mu_k, j, i, \mu_{k+2}, \dots, \mu_n)$ with $i \neq j$, then $P(\mu,\nu) = \frac{t}{3n}$ if $i>j$ and $P(\mu,\nu) = \frac{1}{3n}$ if $j>i$.
    \item If $\mu = (i, \mu_2, \mu_3, \dots, \mu_{n-1}, j)$ and $\nu = (j, \mu_2, \mu_3, \dots, \mu_{n-1}, i)$ with $i \neq j$, then $P(\mu,\nu) = \frac{t}{3n}$ if $j>i$ and $P(\mu,\nu) = \frac{1}{3n}$ if $i>j$.
    \item If $\mu = (\mu_1, \dots, \mu_k, i, \mu_{k+2}, \dots, \mu_n)$ and $\nu = (\mu_1, \dots, \mu_k, i+1, \mu_{k+2}, \dots, \mu_n)$ with $i\leq p-1$, then $P(\mu,\nu)=\frac{u}{3n}$.
    \item If $\mu = (\mu_1, \dots, \mu_k, i+1, \mu_{k+2}, \dots, \mu_n)$ and $\nu = (\mu_1, \dots, \mu_k, i, \mu_{k+2}, \dots, \mu_n)$ with $i\geq1$, then $P(\mu,\nu)=\frac{1}{3n}$.
    \item If none of the above conditions apply but $\nu \neq \mu$ then $P(\mu,\nu)=0$. If $\nu = \mu$ then $P(\mu,\mu) = 1 - \sum_{\nu \neq \mu} P(\mu,\nu)$.
\end{itemize}

As we are interested in the stationary distribution of DASEP, we denote the un-normalised stationary distribution by $\{\pi_{\DASEP}(\mu): \mu \in S_n(\lambda)\}$, which is uniquely defined if we require the $p_\mu$ to be polynomials with greatest common divisor equal to 1.
\end{defi}

\begin{rema}\label{remark:cyclic}
There is an inherent cyclic symmetry in the definition of DASEP.
\end{rema}

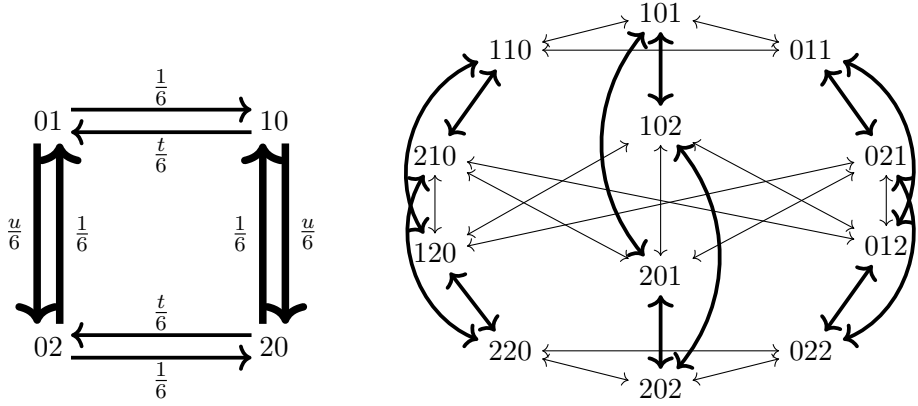
\begin{figure}
\centering
\begin{tikzpicture}[scale=1.5]
    \draw (-1,1) node {01};
    \draw[line width = .5mm, ->] (-.8,1.1) -- (.8,1.1);
    \draw (0,1.3) node {$\frac16$};
    \draw[line width = .5mm, <-] (-.8,.9) -- (.8,.9);
    \draw (0,.7) node {$\frac{t}{6}$};

    \draw[line width = 1mm, ->] (-1.1,.8) -- (-1.1,-0.8);
    \draw (-1.3,0) node {$\frac{u}{6}$};
    \draw[line width = 1mm, <-] (-.9,.8) -- (-.9,-0.8);
    \draw (-.7,0) node {$\frac16$};
    \draw (1,1) node {10};
    \draw (-1,-1) node {02};
    \draw (1,-1) node {20};

    \draw[line width = 1mm, ->] (1.1,.8) -- (1.1,-0.8);
    \draw (1.3,0) node {$\frac{u}{6}$};
    \draw[line width = 1mm, <-] (.9,.8) -- (.9,-0.8);
    \draw (.7,0) node {$\frac16$};
    
    \draw[line width = .5mm, ->] (-.8,-1.1) -- (.8,-1.1);
    \draw (0,-1.3) node {$\frac16$};
    \draw[line width = .5mm, <-] (-.8,-.9) -- (.8,-.9);
    \draw (0,-.7) node {$\frac{t}{6}$};
\end{tikzpicture}
\hspace{.5cm}
\begin{tikzpicture}[scale = 2]
    \draw (-1,.5) node (bbw) {110};
    \draw (0,.75) node (bwb) {101};
    \draw (1,.5) node (wbb) {011};

    \draw[<->] (bbw) -- (bwb);
    \draw[<->] (bwb) -- (wbb);
    \draw[<->] (bbw) -- (wbb);

    \draw (0,0) node (21bwb) {102};
    \draw (0,-1) node (12bwb) {201};
    \draw (-1.5, -.2) node (21bbw) {210};
    \draw (-1.5, -.85) node (12bbw) {120};
    \draw (1.5, -.2) node (21wbb) {021};
    \draw (1.5, -.8) node (12wbb) {012};

    \draw[<->] (21bbw) -- (12bbw);
    \draw[<->] (21bwb) -- (12bwb);
    \draw[<->] (21wbb) -- (12wbb);

    \draw[<->] (12bbw) -- (21bwb);
    \draw[<->] (21bbw) -- (12bwb);

    \draw[<->] (21bwb) -- (12wbb);
    \draw[<->] (12bwb) -- (21wbb);

    \draw[<->] (21bbw) -- (12wbb);
    \draw[<->] (12bbw) -- (21wbb);

    \draw (-1,-1.5) node (22bbw) {220};
    \draw (0, -1.75) node (22bwb) {202};
    \draw (1, -1.5) node (22wbb) {022};

    \draw[<->] (22bbw) -- (22bwb);
    \draw[<->] (22bwb) -- (22wbb);
    \draw[<->] (22bbw) -- (22wbb);

    \draw[line width = .5mm, <->] (bbw) -- (21bbw);
    \draw[line width = .5mm, <->] (bbw) to [bend right=50] (12bbw);

    \draw[line width = .5mm, <->] (wbb) -- (21wbb);
    \draw[line width = .5mm, <->] (wbb) to [bend left=50] (12wbb);

    \draw[line width = .5mm, <->] (bwb) -- (21bwb);
    \draw[line width = .5mm, <->] (bwb) to [bend right=40] (12bwb);

    \draw[line width = .5mm, <->] (22bbw) -- (12bbw);
    \draw[line width = .5mm, <->] (22bbw) to [bend left=50] (21bbw);

    \draw[line width = .5mm, <->] (22bwb) -- (12bwb);
    \draw[line width = .5mm, <->] (22bwb) to [bend right=40] (21bwb);

    \draw[line width = .5mm, <->] (22wbb) -- (12wbb);
    \draw[line width = .5mm, <->] (22wbb) to [bend right=50] (21wbb);
\end{tikzpicture}
    \caption{The state diagram of $\DASEP(2,2,1)$ and $\DASEP(3,2,2)$. We omit loops at each state. Bold edges denote changes in species, while regular edges denote exchanges of particles of different species or between particles and holes.}
    \label{fig:dasep221322}
\end{figure}

Our first main result is about the ratio between the sums of certain groups of $p_\mu$.
For each partition $\lambda$ and each binary word $w = (w_1, \dots, w_n) \in S_n(1^q 0^{n-q})$, define 
$$S_n^w(\lambda) := \{ \mu \in S_n(\lambda)| \mu_i \neq 0 \text{ if and only if } w_i \neq 0 \}$$
as the set of all permutations of $\lambda$ whose zeros are aligned with the binary word $w$.
Then if $\lambda_1 \leq p$ and $\ell(\lambda) = q$, we have $|S_n(\lambda)| = \binom{n}{n-q,m_1, \dots, m_p}$ and $|S_n^w(\lambda)| = \binom{q}{m_1, m_2, \dots, m_p}$.

\begin{exam}
For the partition $\lambda=(2,1)$, we have $|\lambda| = 2+1 = 3$. For $n=3$, we have $|S_3^{011}((2,1))| = \binom{2}{1,1} = 2$. For $n=4$, we have $|S_4((2,1))| = \binom{4}{2,1,1} = 12$.
\end{exam}

\begin{thm}\label{thm:main}
Consider $\DASEP(n,p,q)$ for any positive integers $n,p,q$ with $n >q$. 
\begin{enumerate}
    \item For any two binary words $w, w' \in S_n(1^q 0^{n-q})$, we have $\pi_{\DASEP}(w) = \pi_{\DASEP}(w')$.
    \item For any binary word $w \in S_n(1^q 0^{n-q})$ and partition $\lambda = \langle 1^{m_1} 2^{m_2} \cdots p^{m_p} \rangle$ such that 
    $$m_1 + \cdots + m_p = q,$$
    we have
    \begin{align*}
        \sum_{\mu \in S_n^w(\lambda)} \pi_{\DASEP}(\mu) &= u^{|\lambda|-q} |S_n^w(\lambda)| \pi_{\DASEP}(w) \\
        &= u^{|\lambda|-q} \binom{q}{m_1, m_2, \dots, m_p} \pi_{\DASEP}(w).
    \end{align*}
\end{enumerate}

In other words, the sum of steady state probabilities of states within one equivalence class is proportional to each other and the ratio only depends on the sum of all parts and the multiplicities in the partition; all steady state probabilities with respect to binary words are equal.
\end{thm}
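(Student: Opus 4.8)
The plan is to prove both parts at once by using the two lumpings announced in the introduction to reduce the statement to a one-line computation on the restricted random growth model.

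\emph{Step 1: the lumpings.} Let $\Phi\colon\Gamma^{p,q}_n\to\CBP$ send a state $\mu$ to the pair $(w(\mu),\lambda(\mu))$, where $w(\mu)\in S_n(1^q0^{n-q})$ is the support word of $\mu$ and $\lambda(\mu)$ is the partition obtained by sorting the entries of $\mu$; the fibre over $(w,\lambda)$ is precisely $S_n^w(\lambda)$. I would verify that $\Phi$ is a strong lumping of the DASEP by examining how each transition type acts on $(w,\lambda)$: the two swap moves that exchange two \emph{nonzero} entries fix both coordinates, so they only produce within-fibre moves; the swap moves exchanging a particle and a hole fix $\lambda$, shift one particle around the cycle in $w$, and have probability $\tfrac{t}{3n}$ or $\tfrac{1}{3n}$ according to direction only; and the species moves fix $w$ and change one part of $\lambda$ by $\pm1$, so that from any $\mu\in S_n^w(\lambda)$ the total probability of landing in the fibre where a part of size $i$ has become $i+1$ is $\tfrac{m_i u}{3n}$ (for $1\le i\le p-1$) and the total probability of landing in the fibre where a part of size $i$ has become $i-1$ is $\tfrac{m_i}{3n}$ (for $2\le i\le p$). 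In every case the probability from $\mu$ into a fibre other than $S_n^w(\lambda)$ depends only on $(w,\lambda)$, hence the probability back into $S_n^w(\lambda)$ does too, which is strong lumpability. Composing with the projection $(w,\lambda)\mapsto\lambda$ --- a lumping of $\CBP$ because the species rates above do not involve $w$ --- produces the restricted random growth model $\RRG$: a chain on partitions $\lambda=\langle1^{m_1}\cdots p^{m_p}\rangle$ with $\sum_i m_i=q$ in which a part of size $i$ grows to $i+1$ at rate $\tfrac{m_i u}{3n}$ and a part of size $i$ shrinks to $i-1$ at rate $\tfrac{m_i}{3n}$.

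\emph{Step 2: the two building blocks.} The $\RRG$ is simply $q$ independent biased walkers on the path $1-2-\cdots-p$, and a one-line detailed-balance computation (using $\sum_i(i-1)m_i=|\lambda|-q$) shows it is reversible with stationary weight $\pi_{\RRG}(\lambda)=\binom{q}{m_1,\dots,m_p}u^{|\lambda|-q}$. Separately, for each fixed $\lambda$ the hop moves of $\CBP$ are the one-species PASEP of $q$ particles on the $n$-cycle, whose uniform distribution is stationary because a circular binary word has as many $01$ factors as $10$ factors. I then claim that $\pi_{\CBP}(w,\lambda):=\binom{q}{m_1,\dots,m_p}u^{|\lambda|-q}$, independent of $w$, is stationary for $\CBP$. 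Indeed, in the stationarity equation at a state $(w,\lambda)$ the incoming transitions separate into hop transitions (from states $(w',\lambda)$ with the same $\lambda$) and species transitions (from states $(w,\lambda')$ with the same $w$); the net hop current vanishes because $\pi_{\CBP}(\cdot,\lambda)$ is constant and constants are stationary for the ring PASEP, and the net species current vanishes because $\pi_{\CBP}(w,\cdot)$ equals the reversible weight $\pi_{\RRG}$. As $\CBP$ is irreducible (rearrange particles by hops, then adjust species), $\pi_{\CBP}$ is its stationary distribution up to a scalar.

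\emph{Step 3: transporting back.} Since $\Phi$ is a strong lumping, the pushforward of $\pi_{\DASEP}$ is a stationary vector for $\CBP$, hence equals $c\,\pi_{\CBP}$ for a single constant $c$ independent of $w$ and $\lambda$; that is, $\sum_{\mu\in S_n^w(\lambda)}\pi_{\DASEP}(\mu)=c\binom{q}{m_1,\dots,m_p}u^{|\lambda|-q}$. Taking $\lambda=\langle1^q\rangle$, the fibre $S_n^w(\langle1^q\rangle)$ is the singleton $\{w\}$ and the right-hand side is $c$, so $\pi_{\DASEP}(w)=c$ for every $w\in S_n(1^q0^{n-q})$, proving part~(1); substituting $c=\pi_{\DASEP}(w)$ back into the identity and using $|S_n^w(\lambda)|=\binom{q}{m_1,\dots,m_p}$ gives part~(2). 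Note that $t$ has dropped out entirely, since all $t$-dependence lives in the hop dynamics, which acts trivially on $\lambda$ and uniformly on the supports. The step I expect to demand the most care is checking that $\Phi$ is a genuine \emph{strong} lumping --- especially that the swap moves between adjacent particles of different species, which do not respect the fibres pointwise, create no obstruction once the self-loop bookkeeping is done --- together with orienting the rates on the wrap-around edge consistently so that the one-species ring PASEP really does appear as the hop-part of $\CBP$.
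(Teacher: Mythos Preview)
Your proposal is correct and follows essentially the same architecture as the paper: lump the DASEP to the colored Boolean process, identify the CBP stationary measure by separating the hop dynamics (uniform for the one-species ring PASEP, via the equal count of $01$ and $10$ factors) from the species dynamics, and push forward. The only difference is one of packaging: you derive the species-balance as reversibility of the restricted random growth model, interpreted as $q$ independent biased walkers on $\{1,\dots,p\}$, whereas the paper verifies the identical multinomial ratios by a direct balance-equation computation on the CBP and only afterward introduces the RRG lumping to deduce the corollary on $\sum_{\nu\in S_n(\lambda)}\pi_{\DASEP}(\nu)$.
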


\begin{rema}
    In the special case of $\DASEP(3,p,2)$, \cref{thm:main} was proved by David Ash \cite[Theorem 5.2]{ash2023introducing}.
\end{rema}

\begin{table}
\centering
    \begin{tabular}{|c|c|}
    \hline
       $\mu$ & $\pi_{\DASEP}(\mu)$ \\ \hline
        011 & $u+3t+4$ \\ \hline
        \rowcolor{Gray}
        012 & $u(u+4t+3)$ \\ \hline
        \rowcolor{Gray}
        021 & $u(u+2t+5)$ \\ \hline
        022 & $u^2(u+3t+4)$ \\ \hline
    \end{tabular}
    \hspace{3cm}
    \begin{tabular}{|c|c|}
    \hline
        $\mu$ & $\pi_{\DASEP}(\mu)$ \\ \hline
        0011 & $u+2t+3$ \\ \hline
        0101 & $u+2t+3$ \\ \hline
        0022 & $u^2(u+2t+3)$ \\ \hline
        0202 & $u^2(u+2t+3)$ \\ \hline 
        \rowcolor{Gray}
        0012 & $u(u+3t+2)$ \\ \hline
        0102 & $u(u+2t+3)$ \\ \hline
        \rowcolor{Gray}
        0021 & $u(u+t+4)$ \\ \hline
    \end{tabular}
    \caption{Unnormalized steady state probabilities of $\DASEP(3,2,2)$ and $\DASEP(4,2,2)$. We present all states up to cyclic symmetry. The shaded rows on the left belong to $S_3^{011}((2,1))$, and on the right belong to $S_4^{0011}((2,1))$.}
    \label{tab:3&4}
\end{table}

\begin{exam}
    In $\DASEP(3,2,2)$, we compute the steady state probabilities for the binary word $w = 011$ and permutations of $(1,1,0), (2,1,0), (2,2,0)$ aligned with $w$ as shown in \cref{tab:3&4}.
    By \cref{thm:main}, we have $\pi_{\DASEP}(012) + \pi_{\DASEP}(021) = 2u \pi_{\DASEP}(011)$ and $\pi_{\DASEP}(022) = u^2 \pi_{\DASEP}(011)$, which can be seen from \cref{tab:3&4}.

    Similarly, in $\DASEP(4,2,2)$, we compute the steady state probabilities for binary words $0011$ and $0101$ in \cref{tab:3&4}. 
    We have $\pi_{\DASEP}(0011) = \pi_{\DASEP}(0101)$ and $\pi_{\DASEP}(0012) + \pi_{\DASEP}(0021) = 2u \pi_{\DASEP}(0011)$. 
    Since $0201$ is a cyclic permutation of $0102$, by \cref{remark:cyclic}, we have $\pi_{\DASEP}(0201) = \pi_{\DASEP}(0102)$. Therefore, we can see from \cref{tab:3&4} that $\pi_{\DASEP}(0102) + \pi_{\DASEP}(0201) = 2u \pi_{\DASEP}(0101)$ as asserted by \cref{thm:main}.
\end{exam}

To prove \cref{thm:main}, we also introduce a new Markov chain that we call the \emph{colored Boolean process} (see \cref{def:cBp}), and we show that the DASEP \emph{lumps} to the colored Boolean process. 
This gives a relationship between the stationary distribution of colored Boolean process and the DASEP, see \cref{thm:relationship}.

\begin{coro}\label{cor:restricted}
    For the $\DASEP(n,p,q)$ defined by positive integers $n,p,q, n > q$, and $\lambda, \mu$ two partitions with $\lambda_1 \leq p, \mu_1 \leq p, \ell(\lambda) = \ell(\mu) = q$, we have
    $$\frac{\sum_{\nu\in S_n(\lambda)} \pi_{\DASEP}(\nu)}{\sum_{\nu \in S_n(\mu)} \pi_{\DASEP}(\nu)} = \frac{|S_n(\lambda)|}{|S_n(\mu)|} u^{|\lambda| - |\mu|}.$$
\end{coro}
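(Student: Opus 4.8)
The plan is to deduce the corollary directly from \cref{thm:main} by partitioning each set $S_n(\lambda)$ according to the positions of its zero entries. Since $\ell(\lambda) = q$, every element of $S_n(\lambda)$ has exactly $n-q$ zero entries, so $S_n(\lambda) = \bigsqcup_{w \in S_n(1^q 0^{n-q})} S_n^w(\lambda)$, a disjoint union over the $\binom{n}{q}$ binary words of weight $q$. All of the arithmetic content will come from \cref{thm:main}; the rest is bookkeeping.

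I would first apply part (2) of \cref{thm:main} to each block. Writing $\lambda = \langle 1^{m_1} 2^{m_2} \cdots p^{m_p}\rangle$ with $m_1 + \cdots + m_p = q$, for every binary word $w$ we have
$$\sum_{\mu \in S_n^w(\lambda)} \pi_{\DASEP}(\mu) = u^{|\lambda|-q}\binom{q}{m_1, \dots, m_p}\pi_{\DASEP}(w).$$
By part (1) of \cref{thm:main}, $\pi_{\DASEP}(w)$ equals a common value $c$ for all binary words $w$, and this $c$ depends only on $n$ and $q$, not on $\lambda$. Summing the displayed identity over all $\binom{n}{q}$ binary words and using the elementary multinomial identity $\binom{n}{q}\binom{q}{m_1,\dots,m_p} = \binom{n}{n-q,m_1,\dots,m_p} = |S_n(\lambda)|$ gives
$$\sum_{\nu \in S_n(\lambda)} \pi_{\DASEP}(\nu) = |S_n(\lambda)|\, u^{|\lambda|-q}\, c.$$

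The same computation with $\mu$ in place of $\lambda$ yields $\sum_{\nu \in S_n(\mu)} \pi_{\DASEP}(\nu) = |S_n(\mu)|\, u^{|\mu|-q}\, c$ with the \emph{same} constant $c$. Dividing, the factor $c$ cancels and $u^{|\lambda|-q}/u^{|\mu|-q} = u^{|\lambda|-|\mu|}$, which is exactly the claimed formula. I expect no real obstacle here: the two ingredients beyond \cref{thm:main} — the disjoint-union decomposition of $S_n(\lambda)$ and the multinomial identity — are both routine, and the only point worth stating explicitly is that the common binary-word value $c$ is genuinely the same in the numerator and denominator, which is immediate from part (1) of \cref{thm:main} since the set of binary words of weight $1^q 0^{n-q}$ depends only on $n$ and $q$.
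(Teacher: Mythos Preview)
Your proof is correct. You derive the corollary directly from \cref{thm:main} by decomposing $S_n(\lambda)$ as the disjoint union $\bigsqcup_w S_n^w(\lambda)$, applying part~(2) to each block, and summing with the multinomial identity $\binom{n}{q}\binom{q}{m_1,\dots,m_p}=|S_n(\lambda)|$; the common binary-word value~$c$ from part~(1) then cancels in the ratio.

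The paper takes a different route: it introduces the restricted random growth model on $\chi^{p,q}$, shows the colored Boolean process lumps to it (\cref{2ndlumping}), computes the ratio $\pi_{\RRG}(\lambda)/\pi_{\RRG}(\mu)$ there (\cref{thm:multinomial}), and then pulls the result back to the DASEP via \cref{prop:lumping}. Your argument is more economical for this particular corollary, since \cref{thm:main} is already available and the extra lumping layer is not needed to reach the conclusion. The paper's route, on the other hand, explains structurally \emph{why} the ratio depends only on $|\lambda|$ and the multiplicities: after forgetting the binary word, the remaining chain on partitions has a product-form stationary measure. Both arguments ultimately rest on \cref{thm:equal}, so the difference is one of packaging rather than of underlying content.
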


In \cref{thm:n22}, we also give explicit formulas for the stationary distributions of the infinite family $\DASEP(n,2,2)$ for any $n \geq 3$. The formulas depend on whether $n$ is odd or even. Both are described by polynomial sequences given by a second-order homogeneous recurrence relation (see \cref{thm:n22}).

When we specialize to $u=t=1$, the polynomial sequences specialize to the trinomial transform of Lucas number \href{https://oeis.org/A082762}{A082762} or the binomial transform of the denominators of continued fraction convergents to $\sqrt{5}$ \href{https://oeis.org/A084326}{A084326} \cite{oeis}.

The structure of this paper is as follows. In \cref{daseptocbp}, we define the \emph{colored Boolean process}, and we show that the DASEP \emph{lumps} to the colored Boolean process. In \cref{cbptorgm}, we define the \emph{restricted random growth model}, which is a Markov chain on Young diagrams, and we show that the colored Boolean process lumps to the restricted random growth model. In \cref{n22section}, we give explicit formulas for the stationary distributions of the infinite family $\DASEP(n,2,2)$.

\acknowledgements{We thank Lauren Williams for suggesting the problem and helping me better understand ASEP.}

\section{The DASEP lumps to the colored Boolean process}\label{daseptocbp}

In this section, we define the \emph{colored Boolean process}, and we show that the DASEP \emph{lumps} to the colored Boolean process. 
We compute the ratios between steady states probabilities in the colored Boolean process, leading us to prove \cref{thm:main}.

\begin{defi}\label{def:cBp}
The \emph{colored Boolean process} is a Markov chain dependent on three positive integers $n,p,q$ with $n>q$ on states space
$$\Omega^{p,q}_n = \{(w, \lambda) | w \in S_n(1^q 0^{n-q}), \lambda_1 \leq p, \ell(\lambda) = q \} $$
with the following transition probabilities:
\begin{itemize}
    \item $Q((w,\lambda),(w,\lambda')) = \frac{m_iu}{3n}$ if $\lambda$ contains $m_i \geq 1$ parts equal to $i$ and $\lambda'$ is obtained from $\lambda$ by changing a part equal to $i$ to a part equal to $i+1$.
    \item $Q((w,\lambda),(w,\lambda')) = \frac{m_i}{3n}$ if $\lambda$ contains $m_i \geq 1$ parts equal to $i$ and $\lambda'$ is obtained from $\lambda$ by changing a part equal to $i$ to a part equal to $i-1$.
    \item $Q((w,\lambda),(w',\lambda)) = \frac{1}{3n}$ if $w'$ is obtained from $w$ by $01 \to 10$ at a unique position (allowing wrap-around at the end).
    \item $Q((w,\lambda),(w',\lambda)) = \frac{t}{3n}$ if $w'$ is obtained from $w$ by $10 \to 01$ at a unique position (allowing wrap-around at the end).
    \item If none of the above conditions apply but $w \neq w'$ or $\lambda \neq \lambda'$, then $Q((w,\lambda),(w',\lambda')) = 0$. 
    \item $Q((w,\lambda),(w,\lambda)) = 1-\sum_{(w',\lambda') \neq (w,\lambda)} Q((w,\lambda),(w',\lambda')).$
\end{itemize}

We denote the stationary distribution of $\Omega^{p,q}_n$ by $\pi_{\CBP}$.
\end{defi}

We think of parts of different sizes as particles of different colors, or species, hence the name.

The relation between the colored Boolean process and the DASEP is captured by the following notion.
\begin{defi}\cite[Section 6.3]{kemeny}\cite[Definition 2.5, Theorem 2.6]{Pang2015LumpingsOA}\label{def:lumping}
Let $\{X_t\}$ be a Markov chain on state space $\Omega_X$ with transition matrix $P$, and let $f: \Omega_X \to \Omega_Y$ be a surjective map. Suppose there is an $|\Omega_Y| \times |\Omega_Y|$ matrix $Q$ such that for all $y_0, y_1 \in \Omega_Y$, if $f(x_0) = y_0$, then
$$ \sum_{x: f(x) = y_1} P(x_0, x) = Q(y_0, y_1). $$
Then $\{f(X_t)\}$ is a Markov chain on $\Omega_Y$ with transition matrix $Q$. We say that $\{f(X_t)\}$ is a \emph{lumping} of $\{X_t\}$ and $\{X_t\}$ is a \emph{lift} of $\{f(X_t)\}$.
\end{defi}

\begin{figure}
\centering
\includegraphics{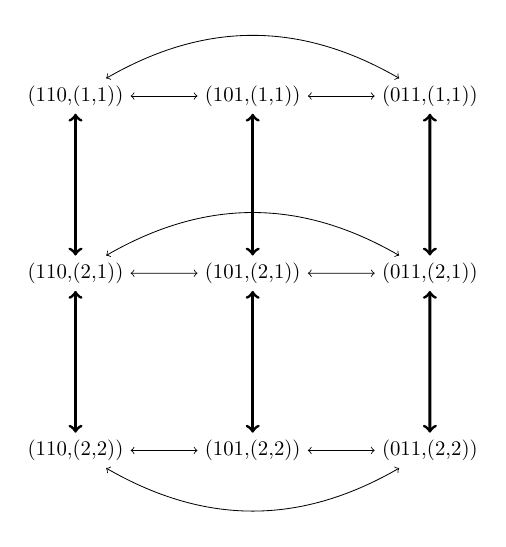}
\caption{The state diagram of $\Omega^{2,2}_3$, as a lumping of $\DASEP(3,2,2)$ as in \cref{fig:dasep221322}. The bold edges denote the changes of species, while the regular edges denote the exchanges between particles of different species or between particles and holes.}
\label{fig:omega322}
\end{figure}

\begin{thm}\label{thm:relationship}
The projection map $f: \Gamma^{p,q}_n \to \Omega^{p,q}_n$ sending each $\mu \in S_n^w(\lambda)$ to $(w,\lambda)$ is a lumping of $\DASEP(n,p,q)$ onto the colored Boolean process $\Omega^{p,q}_n$. 
\end{thm}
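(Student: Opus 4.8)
The plan is to verify the lumping criterion of \cref{def:lumping} directly. Fix $y_0 = (w_0,\lambda_0) \in \Omega^{p,q}_n$ and an arbitrary representative $\mu_0 \in f^{-1}(y_0) = S_n^{w_0}(\lambda_0)$; note that $f$ is surjective, since the $q$ positive parts of any $\lambda$ can be placed in the $q$ one-positions of any $w$. For each $y_1 = (w_1,\lambda_1)$ I would compute $\sum_{\mu \in f^{-1}(y_1)} P(\mu_0,\mu)$ and check that it equals the value $Q(y_0,y_1)$ prescribed in \cref{def:cBp}, independently of the chosen $\mu_0$. Because every row of $P$ and of $Q$ sums to $1$, it suffices to do this for $y_1 \neq y_0$; the diagonal entry then follows by subtracting the off-diagonal entries from $1$.

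The first step is to sort the moves of $\DASEP(n,p,q)$ by their effect on the pair $(w,\lambda)$. A transposition of two cyclically adjacent positions both carrying positive values (interior or wrap-around) fixes both $w$ and $\lambda$; a transposition that swaps a $0$ with a positive value fixes $\lambda$ and changes $w$ at exactly one cyclically adjacent pair of coordinates, turning a $10$ into $01$ or vice versa; and a species change $i \to i+1$ or $i+1 \to i$ fixes $w$ and replaces $\lambda$ by the partition obtained by raising one part equal to $i$ to $i+1$, respectively lowering one part equal to $i+1$ to $i$. In particular, no single move changes $w$ and $\lambda$ simultaneously, so when $w_1 \neq w_0$ and $\lambda_1 \neq \lambda_0$ the sum is empty and equals $Q(y_0,y_1) = 0$.

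It remains to handle the two one-sided cases. If $\lambda_1 = \lambda_0$ and $w_1 \neq w_0$, only transpositions swapping a $0$ with a positive value can carry $\mu_0$ into $f^{-1}(w_1,\lambda_0)$; such a move reaching $w_1$ exists only if $w_1$ is obtained from $w_0$ by a single $01 \leftrightarrow 10$ swap at a (necessarily unique) cyclic position, in which case exactly one move of $\mu_0$ realizes it. Its probability depends only on the hole/particle pattern -- it is $\frac{t}{3n}$ for a $10 \to 01$ swap and $\frac{1}{3n}$ for a $01 \to 10$ swap, interior or wrap-around -- because the DASEP rate for moving a particle past a hole does not depend on the particle's species; this is precisely $Q(y_0,y_1)$, and it is independent of $\mu_0$. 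If instead $w_1 = w_0$ and $\lambda_1 \neq \lambda_0$, only species changes can reach $f^{-1}(w_0,\lambda_1)$, and only if $\lambda_1$ arises from $\lambda_0$ by raising one part equal to $i$ to $i+1$ or by lowering one part equal to $i+1$ to $i$ -- the two options are mutually exclusive since they change $|\lambda_0|$ in opposite directions, and the value $i$ is then determined by $(\lambda_0,\lambda_1)$. In the raising case each of the $m_i(\lambda_0)$ coordinates of $\mu_0$ equal to $i$ gives one move of probability $\frac{u}{3n}$, all landing in $f^{-1}(w_0,\lambda_1)$, so the total is $\frac{m_i(\lambda_0)\, u}{3n} = Q(y_0,y_1)$; the lowering case is identical with $u$ replaced by $1$ and $i$ by $i+1$. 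In every case the sum equals $Q(y_0,y_1)$ from \cref{def:cBp} and does not depend on the representative, which is exactly the lumping criterion.

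This is essentially bookkeeping rather than a hard argument. The two points that require attention are that the DASEP hopping rates are insensitive to which species is hopping (so the hole/particle transpositions out of a fiber can be summed uniformly) and that one species-change move of the colored Boolean process is the image of $m_i(\lambda_0)$ distinct DASEP moves, which is what produces the multiplicity $m_i$. The fussiest details -- verifying that a given $01 \leftrightarrow 10$ swap occurs at a unique cyclic position, and matching the wrap-around rates, where the DASEP's interior and boundary conventions differ by the $t \leftrightarrow 1$ interchange that compensates for reading the boundary pair in the opposite order -- are each a one-line check.
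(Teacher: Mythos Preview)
Your proof is correct and follows essentially the same approach as the paper: a direct verification of the lumping criterion by a case analysis of the DASEP moves according to whether they change $w$, change $\lambda$, or neither. Your write-up is in fact slightly more careful than the paper's on a few minor points (explicitly noting surjectivity of $f$, that swaps of two positive entries stay in the same fiber, and that the hopping rate past a hole is species-independent), but the argument is the same.
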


\begin{proof}
Fix $(w_0,\lambda_0)$ and $(w_1, \lambda_1)$, we want to show that for any $\mu_0 \in S_n^{w_0}(\lambda_0)$, the quantity
$$ \sum_{\mu: f(\mu) \in S_n^{w_1}(\lambda_1)} P(\mu_0, \mu) $$ is independent of the choice of $\mu_0$ and equal to $Q((w_0,\lambda_0),(w_1,\lambda_1))$. We may assume $(w_0, \lambda_0) \neq (w_1, \lambda_1)$, because the probabilities add up to 1.

This quantity is nonzero only in the following cases:
\begin{itemize}
    \item $w_0 = w_1$ and if $\lambda_0 = 1^{m_1}2^{m_2}\cdots p^{m_p}$ there exists a unique $i \in [1, p-1]$ with $m_i \geq 1$ such that $\lambda_1 = \cdots i^{m_i-1} (i+1)^{m_{i+1} +1} \cdots$. We upgrade the species of a particle from $i$ to $i+1$, and there are again $m_i$ ways to do it, where each transition probability $P(\mu_0, \mu) = \frac{u}{3n}$, so the quantity is equal to $\frac{m_i u}{3n}$.
    \item $w_0 = w_1$ and if $\lambda_0 = 1^{m_1}2^{m_2}\cdots p^{m_p}$ there exists a unique $i \in [2,p]$ with $m_i \geq 1$ such that $\lambda_1 = \cdots (i-1)^{m_{i-1} + 1} i^{m_i - 1} \cdots$.
    We downgrade the species of a particle from $i$ to $i-1$, and there are $m_i$ ways to do it, so there are $m_i$ number of $\mu$'s in $S_n^{w_1}(\lambda_1)$ with nonzero $P(\mu_0, \mu) = \frac{1}{3n}$, so the quantity is equal to $\frac{m_i}{3n}$.
    \item $\lambda_0 = \lambda_1$ and $w_1$ is obtained from $w_0$ by $01 \to 10$ at a unique position (allowing wrap-around at the end).
    This quantity is equal to $\frac{1}{3n}$.
    \item $\lambda_0 = \lambda_1$ and $w_1$ is obtained from $w_0$ by $10 \to 01$ at a unique position (allowing wrap-around at the end).
    This quantity is equal to $\frac{t}{3n}$.
\end{itemize}

The nonzero transition probabilities of $\Omega$ in each of the four cases above is the same as we defined.
\end{proof}

We may use the stationary distribution of $\{X_t\}$ to compute that of its lumping.
\begin{prop}\cite[Section 6.3]{kemeny}\label{prop:lumping}
    Suppose $p$ is a stationary distribution for $\{X_t\}$, and let $\pi$ be the measure on $\Omega_Y$ defined by $\pi(y) = \sum_{x: f(x)=y} p(x)$. Then $\pi$ is a stationary distribution for $\{f(X_t)\}$.
\end{prop}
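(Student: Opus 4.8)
The plan is to verify directly that the measure $\pi$ satisfies the stationarity equation $\sum_{y_0 \in \Omega_Y} \pi(y_0) Q(y_0, y_1) = \pi(y_1)$ for every $y_1 \in \Omega_Y$, using the defining lumping identity from \cref{def:lumping} together with the stationarity of $p$. First I would unfold the definition $\pi(y_0) = \sum_{x_0 : f(x_0) = y_0} p(x_0)$, so that the left-hand side becomes the double sum $\sum_{y_0} \sum_{x_0 : f(x_0) = y_0} p(x_0) Q(y_0, y_1)$.

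The key step is to rewrite $Q(y_0, y_1)$ using the lumping hypothesis. Since $f(x_0) = y_0$, we have $Q(y_0, y_1) = \sum_{x : f(x) = y_1} P(x_0, x)$, and crucially this holds for \emph{every} representative $x_0$ of the fiber $f^{-1}(y_0)$; this is exactly what makes the substitution legitimate inside the inner sum over $x_0$. After substituting, I would observe that summing over $y_0 \in \Omega_Y$ and then over $x_0 \in f^{-1}(y_0)$ is the same as summing over all $x_0 \in \Omega_X$, since $f$ is surjective and its fibers partition $\Omega_X$. This collapses the expression to $\sum_{x_0 \in \Omega_X} p(x_0) \sum_{x : f(x) = y_1} P(x_0, x)$.

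Finally I would interchange the two finite sums to obtain $\sum_{x : f(x) = y_1} \sum_{x_0 \in \Omega_X} p(x_0) P(x_0, x)$, and apply the stationarity of $p$ (namely $\sum_{x_0} p(x_0) P(x_0, x) = p(x)$) to each inner term, yielding $\sum_{x : f(x) = y_1} p(x) = \pi(y_1)$, as desired. A short remark would also confirm that $\pi$ is a genuine probability distribution when $p$ is: $\sum_{y \in \Omega_Y} \pi(y) = \sum_{y} \sum_{x : f(x) = y} p(x) = \sum_{x \in \Omega_X} p(x) = 1$, again by the partition property of the fibers.

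I do not expect a genuine obstacle here, as this is a standard fact about lumpings; the only point requiring care is making the representative-independence of $Q(y_0, y_1)$ explicit at the moment of substitution, since that is the one place where the lumping hypothesis (as opposed to routine rearrangement of finite sums) is actually invoked.
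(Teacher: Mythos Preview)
Your argument is correct and is the standard direct verification. Note, however, that the paper does not supply its own proof of this proposition: it is stated with a citation to \cite[Section 6.3]{kemeny} and used as a known fact, so there is no in-paper proof to compare against.
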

Thus, we may use the stationary distribution of $\Omega^{p,q}_n$ to study that of $\Gamma_n^{p,q}$. We have the following corollary due to \cref{prop:lumping} and \cref{thm:relationship}.

\begin{coro}
    The unnormalized steady state probabilities $\{\pi_{\CBP}(w,\lambda)| (w,\lambda) \in \Omega^{p,q}_n\}$ of the colored Boolean process and the unnormalized steady state probabilities $\{\pi_{\DASEP}(\mu)|\mu \in \Gamma^{p,q}_n\}$ of the DASEP are related as follows:
$$\pi_{\CBP}(w,\lambda) \propto \sum_{\mu \in S_n^w(\lambda)} \pi_{\DASEP}(\mu).$$
\end{coro}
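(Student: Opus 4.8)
The plan is to deduce this directly from \cref{thm:relationship} together with \cref{prop:lumping}, so the corollary is essentially a packaging statement. By \cref{thm:relationship}, the projection $f\colon \Gamma^{p,q}_n \to \Omega^{p,q}_n$ with $f(\mu) = (w,\lambda)$ for $\mu \in S_n^w(\lambda)$ is a lumping of $\DASEP(n,p,q)$ onto the colored Boolean process, and by the very definition of $S_n^w(\lambda)$ the fiber $f^{-1}(w,\lambda)$ is precisely $S_n^w(\lambda)$. Apply \cref{prop:lumping} with $\{X_t\}$ the DASEP and $p = \pi_{\DASEP}$, which is a (unnormalized) stationary distribution for the DASEP by definition: the conclusion is that the measure $(w,\lambda) \mapsto \sum_{\mu : f(\mu) = (w,\lambda)} \pi_{\DASEP}(\mu) = \sum_{\mu \in S_n^w(\lambda)} \pi_{\DASEP}(\mu)$ is a stationary distribution for the colored Boolean process.

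The one remaining point is to upgrade ``is a stationary distribution for $\Omega^{p,q}_n$'' to ``is proportional to $\pi_{\CBP}$''. For this I would verify that the colored Boolean process on $\Omega^{p,q}_n$ is irreducible. Starting from any state $(w,\lambda)$, the $01 \leftrightarrow 10$ moves with wrap-around act transitively on $S_n(1^q 0^{n-q})$ while fixing $\lambda$ (here we use $n > q$ and $q \geq 1$, so each binary word has at least one $0$ and at least one $1$), and the species moves $i \leftrightarrow i+1$ applied one box at a time connect $\lambda$ to any partition with $\ell(\lambda') = q$ and $\lambda'_1 \le p$ while fixing $w$. Hence any two states communicate, so the chain is an irreducible Markov chain on a finite state space and its stationary distribution is unique up to a positive scalar.

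Combining the two observations: both $(w,\lambda) \mapsto \sum_{\mu \in S_n^w(\lambda)} \pi_{\DASEP}(\mu)$ and $\pi_{\CBP}$ are stationary distributions for the irreducible chain $\Omega^{p,q}_n$, so they differ by a single global positive constant, which is exactly the asserted proportionality. The only genuine content beyond citing \cref{thm:relationship} and \cref{prop:lumping} is the irreducibility check, and that is entirely routine; the common proportionality constant could be pinned down from the $\gcd$-normalization used to define $\pi_{\CBP}$ and $\pi_{\DASEP}$, but since the statement only claims proportionality no further bookkeeping is needed.
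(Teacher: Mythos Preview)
Your proposal is correct and follows the same route as the paper, which simply attributes the corollary to \cref{prop:lumping} and \cref{thm:relationship} without further argument. Your added irreducibility check is a welcome bit of rigor; the paper only asserts ergodicity of the colored Boolean process later, in the proof of \cref{thm:equal}.
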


\begin{thm}\label{thm:equal}
Consider the colored Boolean process $\Omega^{p,q}_n$.
\begin{enumerate}
    \item The steady state probabilities of all binary words are equal, i.e., 
    $$\pi_{\CBP}(w, 1^q 0^{n-q}) = \pi_{\CBP}(w', 1^q 0^{n-q})$$ for any $w, w' \in S_n(1^q 0^{n-q})$.
    \item The steady state probability $\pi_{\CBP}(w,\lambda)$ of an arbitrary state $(w, \lambda)$ can be expressed in terms of the steady state probability $\pi_{\CBP}(w,1^q 0^{n-q})$ (of the corresponding state $(w, 1^q 0^{n-q})$ with the trivial partition $(1^q 0^{n-q})$) as follows:
    \begin{equation}\label{eq:thmequal}
        \pi_{\CBP}(w,\lambda) = u^{|\lambda|-q} \binom{q}{m_1, \dots, m_p} \pi_{\CBP}(w, 1^q 0^{n-q}).
    \end{equation}
\end{enumerate}
\end{thm}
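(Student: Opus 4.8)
The plan is to analyze the stationary equations of the colored Boolean process directly, exploiting the fact that the chain decomposes into two "independent-looking" pieces: the $w$-coordinate moves only via the binary ASEP-type swaps $01\leftrightarrow 10$ (with rates $\tfrac1{3n}$ and $\tfrac t{3n}$, never changing $\lambda$), while the $\lambda$-coordinate moves only via the species up/down transitions (with rates $\tfrac{m_iu}{3n}$ and $\tfrac{m_i}{3n}$, never changing $w$). So I would first guess the product-type ansatz suggested by \eqref{eq:thmequal}, namely $\pi_{\CBP}(w,\lambda) = c(w)\,g(\lambda)$ where $g(\lambda) = u^{|\lambda|-q}\binom{q}{m_1,\dots,m_p}$, and then verify it satisfies the global balance equation at every state $(w,\lambda)$. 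The up-shot I want is that $c(w)$ is forced to be constant in $w$ (part (1)) and that $g$ is forced to be the stated formula up to the overall constant (part (2)).

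The key steps, in order: (i) Write the balance equation at $(w,\lambda)$, separating the incoming/outgoing flow into the "$w$-moves" (which fix $\lambda$) and the "$\lambda$-moves" (which fix $w$). (ii) For the $\lambda$-part, show that for each fixed $w$, the restriction of the chain to $\{(w,\lambda):\lambda\}$ — viewed with the given up/down rates — has $g(\lambda)$ as its stationary vector; this is essentially a one-dimensional birth-death-type balance on the poset of partitions, and the natural route is to check detailed balance for each single up/down move: one needs $g(\lambda)\cdot m_i u = g(\lambda')\cdot m_{i+1}'$ where $\lambda'$ is $\lambda$ with one $i$ raised to $i+1$, and a direct computation with $g(\lambda)=u^{|\lambda|-q}\binom{q}{m_1,\dots,m_p}$ confirms this since $\binom{q}{\dots}m_i = \binom{q}{\dots,m_i-1,m_{i+1}+1,\dots}(m_{i+1}+1)$ and the power of $u$ increases by one. (iii) Because $g$ is detailed-balanced for the $\lambda$-moves uniformly in $w$, the $\lambda$-flow in and out of $(w,\lambda)$ cancels within the ansatz, so the balance equation collapses to the balance equation for $c(w)$ under the binary-word moves alone — but those moves are rate-symmetric across all $w$ in the sense that the total out-rate from any $w$ equals the total in-rate when $c$ is constant (the $01\!\to\!10$ and $10\!\to\!01$ moves pair up), giving $c(w)\equiv c$. (iv) Finally, invoke irreducibility of $\Omega^{p,q}_n$ (clear from the transition rules: any $w$ reaches any $w'$ by binary swaps, any $\lambda$ reaches any $\mu$ by raising/lowering species) and hence uniqueness of the stationary distribution up to scale, so the product solution is *the* stationary distribution, and normalizing at $\lambda = 1^q0^{n-q}$ (where $g = 1$) yields exactly \eqref{eq:thmequal} and part (1) as the special case $\lambda = 1^q0^{n-q}$.

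I expect step (iii) to be the main obstacle: one must be careful that the $w$-moves and $\lambda$-moves genuinely do not interact in the balance equation, i.e., that the holding probability $Q((w,\lambda),(w,\lambda))$ splits cleanly and that no transition simultaneously changes $w$ and $\lambda$ (true by \cref{def:cBp}), so that the global balance at $(w,\lambda)$ really is the sum of a "$\lambda$-balance at fixed $w$" equation and a "$w$-balance at fixed $\lambda$" equation. Once that separation is justified, detailed balance for the $\lambda$-part is a routine binomial identity and the $w$-part is the statement that the uniform measure is stationary for the lumped binary ASEP on a ring — which is standard, but I would still spell out the cancellation of the $01\leftrightarrow10$ flows. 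An alternative to the detailed-balance computation in (ii), if one prefers to avoid guessing, is to first prove part (1) by a direct symmetry/coupling argument on the $w$-coordinate, then prove (2) by induction on $|\lambda|-q$ using the balance equation at $(w,\lambda)$ to solve for $\pi_{\CBP}(w,\lambda)$ in terms of partitions of smaller total size; but the product ansatz is cleaner and I would present that.
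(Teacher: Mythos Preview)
Your proposal is correct and follows essentially the same approach as the paper: guess the product form $\pi_{\CBP}(w,\lambda)=c(w)\,g(\lambda)$, verify that the balance equations split into a $\lambda$-part (handled by the multinomial-coefficient ratio identity) and a $w$-part (the uniform distribution for the binary ASEP on a ring), and conclude by uniqueness of the stationary distribution. The only cosmetic difference is that you explicitly invoke detailed balance for the $\lambda$-moves, whereas the paper verifies global balance directly via the same ratio computation; your framing is arguably a bit cleaner, but the underlying argument is identical.
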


\begin{proof}
The colored Boolean process is again an ergodic markov chain, so we only need to show that the above relations satisfy the balance equations given by the transition matrix of $\Omega$.

Let us first check it for $\lambda = 1^q 0^{n-q}$. 
For any binary word $w$, denote $\pi_{\CBP}(w, 1^q 0^{n-q})$ by $p_w$. 
Let $b_w$ be the number of blocks of consecutive $1$'s in $w$ (allowing wrap-around). 
Notice that any occurrence of $01$ in $w$ must begin a block, and any occurrence of $10$ must signify the end of a block. We have
\begin{equation}\label{balance}
    (qu + b_w + b_w t) p_w = q \pi_{\CBP}(w, 1^{q-1}2) + b_w t \sum_{\underset{10\to01}{w' \to w}} p_{w'} + b_w \sum_{\underset{01 \to 10}{w'' \to w}} p_{w''}.
\end{equation}
Expanding the multinomial coefficient, we are left with
$$b_w(1 + t) p_w = b_w t \sum_{\underset{10\to01}{w' \to w}} p_{w'} + b_w \sum_{\underset{01 \to 10}{w'' \to w}} p_{w''} $$
which will be satisfied if we set all $p_w$'s to be equal.

For a generic $\lambda = \langle 1^{m_1}2^{m_2}\cdots p^{m_p} \rangle$, \cref{balance} is modified on the left hand side such that $qu$ becomes $au + b$ where $a = m_1 + \cdots + m_{p-1}$ and $b = m_2 + \cdots + m_p$. 
On the right hand side pf \cref{balance}, we change the first term to
{\small
$$\sum_{i < p} (m_{i+1} + 1) \pi_{\CBP}(w, \cdots i^{m_i-1} (i+1)^{m_{i+1} +1} \cdots) + \sum_{i > 1} (m_{i-1}+1)u \pi_{\CBP}(w, \cdots (i-1)^{m_{i-1} + 1} i^{m_i - 1} \cdots).$$
}

Expand this using \cref{eq:thmequal}, the multinomial coefficients gives the ratios
\begin{align*}
    \frac{\pi_{\CBP}(w, \cdots i^{m_i-1} (i+1)^{m_{i+1} +1} \cdots)}{\pi_{\CBP}(w, 1^{m_1}2^{m_2}\cdots p^{m_p})} &= \frac{m_iu}{m_{i+1}+1} \\
    \frac{\pi_{\CBP}(w, \cdots (i-1)^{m_{i-1}+1} i^{m_i-1} \cdots)}{\pi_{\CBP}(w, 1^{m_1}2^{m_2}\cdots p^{m_p})} &= \frac{m_i}{(m_{i-1}+1)u}
\end{align*}
for all $i$. Then we have a term by term equality for each $i$ where $a$ corresponds to the first summation and $b$ corresponds to the second.
\end{proof}

\begin{proof}[Proof of \cref{thm:main}]
    This follows directly from \cref{prop:lumping}, \cref{thm:relationship} and \cref{thm:equal}.
\end{proof}

\section{The colored Boolean process lumps to the restricted random growth model}\label{cbptorgm}

In this section, we define the \emph{restricted random growth model}, which is a Markov chain on the set of Young diagrams fitting inside a rectangle, and we show it to be a lumping of the colored Boolean process.

For partitions $\nu$ and $\lambda$, we write $\lambda \gtrdot_i \nu$ if there exists a unique $j$ such that $\lambda_j = \nu_j + 1 = i$ and for all $k \neq j$ we have $\lambda_k = \nu_k$. We write $\nu \lessdot_i \lambda$ if there exists a unique $j$ such that $\nu_j = \lambda_j - 1 = i$ and for all $k \neq j$ we have $\nu_k = \lambda_k$.
In both cases, we have $m_i(\nu) = m_i(\lambda)+1$ where $m_i(\nu)$ is the number of parts of $\nu$ equal to $i$.

\begin{exam}
    $(2,\textcolor{magenta}{2},1,0,0) \gtrdot_2 (2,1,1,0,0)$ and $(2,\textcolor{magenta}{1},1,0,0) \lessdot_1 (2,2,1,0,0)$.
\end{exam}

\begin{defi}
    Define the \emph{restricted random growth model} on the the state space $\chi^{p,q} = \{\lambda: \lambda_1 \leq p, \ell(\lambda) = q\}$ which includes all partitions that fit inside a $q \times p$ rectangle but do not fit inside a shorter rectangle, with transition probabilities $d^{(n)}_{\nu, \lambda}$ as follows:
    \begin{itemize}
        \item If $\nu \lessdot_i \lambda$, then $d^{(n)}_{\nu,\lambda} = \frac{m_i(\nu) u}{3n}$.
        \item If $\nu \gtrdot_i \lambda$, then $d^{(n)}_{\nu, \lambda} = \frac{m_i(\nu)}{3n}$.
        \item In all other cases where $\nu \neq \lambda$, $d^{(n)}_{\nu, \lambda} = 0$.
        \item $d^{(n)}_{\lambda,\lambda} = 1-\sum_{\nu : \nu \neq \lambda} d^{(n)}_{\nu,\lambda}$.
    \end{itemize}
    Recall that we also denote the number of parts of the partition $\nu$ that equal to $i$ by $m_i(\nu)$.
    We denote the unnormalized steady state probabilities of the restricted random growth model by $\pi_{\RRG}$.
\end{defi}

\begin{figure}[h]
    \centering
    \begin{tikzpicture}[scale=1.5]
        \draw (0,1.5) node (11) {{\tiny \ydiagram{1,1}}};
        \draw (0,0) node (21) {{\tiny\ydiagram{2,1}}};
        \draw (0,-1.5) node (21) {\tiny{\ydiagram{2,2}}};
        
        \draw[line width = .5mm, ->] (-0.2, 1) -- (-.2,.5);
        \draw (-.5, .75) node {$2u$};
        \draw[<-] (.2, 1) -- (.2, .5);
        \draw (.4, .75) node {$1$};

        \draw[line width = .5mm, ->] (-0.2, -.5) -- (-.2,-1);
        \draw (-.4, -.75) node {$u$};
        \draw[<-] (.2, -.5) -- (.2, -1);
        \draw (.4, -.75) node {$2$};
    \end{tikzpicture}
    \hspace{2cm}
    \begin{tikzpicture}[scale=1.5]
        \draw (0,1.5) node (11) {{\tiny \ydiagram{1,1}}};
        \draw (-1,0) node (21) {{\tiny \ydiagram{2,1}}};
        \draw (0,0) node {$\cong$};
        \draw (1,0) node (12) {{\tiny \ydiagram{1,2}}};
        \draw (0,-1.5) node (22) {{\tiny \ydiagram{2,2}}};

        \draw[line width = .5mm, ->] (-.4, 1) -- (-1.4,.5);
        \draw[line width = .5mm, ->] (.4, 1) -- (1.4,.5);
        \draw (-1, .9) node {$u$};
        \draw (1, .9) node {$u$};
        \draw[<-] (-.1, 1) -- (-1.1,.5);
        \draw[<-] (.1, 1) -- (1.1,.5);
        \draw (-.4, .5) node {$1$};
        \draw (.4, .5) node {$1$};

        \draw[line width = .5mm, ->] (-1.4, -.5) -- (-.4,-1);
        \draw[line width = .5mm, ->] (1.4, -.5) -- (.4,-1);
        \draw (-1, -.9) node {$u$};
        \draw (1, -.9) node {$u$};
        \draw[<-] (-1.1, -.5) -- (-.1, -1);
        \draw[<-] (1.1, -.5) -- (.1, -1);
        \draw (-.4, -.5) node {$1$};
        \draw (.4, -.5) node {$1$};
    \end{tikzpicture}
    \caption{The state diagram of the restricted random growth model on $\chi^{2,2}$. The Markov chain on the left can be viewed as a lumping of the Markov chain on the right in which we do not rearrange the parts in weakly decreasing order.}
    \label{fig:restricted}
\end{figure}
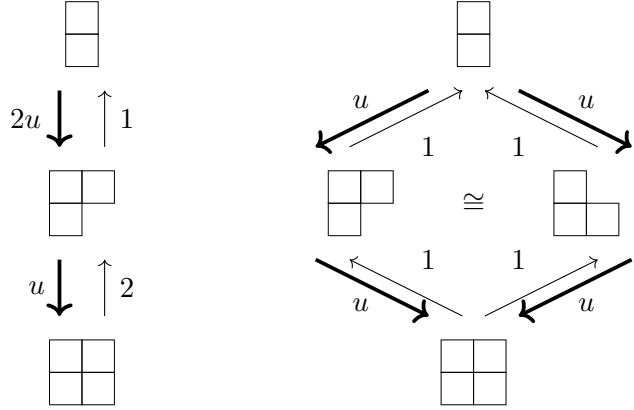

\begin{rema}
    In other words, the transitions randomly add or remove a box from the right of a random chosen row of the Young diagram of the partition (conditioned on rightly fitting in the $q \times p$ rectangle) as shown on the right hand side of \cref{fig:restricted}, then rearrange the parts in weakly decreasing order as shown on the left hand side of \cref{fig:restricted}.

    The 1d random growth model~\cite{Ferrari2010RandomGM} is a growth model on diagrams not necessarily arranged in weakly decreasing order. The diagrams do not need to fit in a rectangle, and the times between arrivals are independent i.i.d..
\end{rema}

\begin{thm}\label{2ndlumping}
    The projection map on state spaces $g: \Omega^{p,q}_n \to \chi^{p,q}$ sending $(w,\lambda)$ to $\lambda$ (forgetting the positions of 0's) is a lumping of the colored Boolean process $\Omega^{p,q}_n$ to the restricted random growth $\chi^{p,q}$.
\end{thm}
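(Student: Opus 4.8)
The plan is to verify \cref{def:lumping} directly for $g$, in the same style as the proof of \cref{thm:relationship}. Fix target partitions $\lambda_0,\lambda_1 \in \chi^{p,q}$ and an arbitrary preimage state $(w_0,\lambda_0) \in \Omega^{p,q}_n$. I must show that
$$\sum_{(w,\mu):\, g(w,\mu) = \lambda_1} Q\big((w_0,\lambda_0),(w,\mu)\big) \;=\; \sum_{w \in S_n(1^q0^{n-q})} Q\big((w_0,\lambda_0),(w,\lambda_1)\big)$$
does not depend on the choice of $w_0$ and equals $d^{(n)}_{\lambda_0,\lambda_1}$. As in \cref{thm:relationship}, it suffices to treat the case $\lambda_0 \ne \lambda_1$, the case $\lambda_0 = \lambda_1$ then being forced because every row of $Q$ (and of $d^{(n)}$) sums to $1$.

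The key observation, read off from \cref{def:cBp}, is that the only transitions of the colored Boolean process that alter the partition coordinate are the \emph{upgrade} moves $(w,\lambda)\to(w,\lambda')$ with $\lambda \lessdot_i \lambda'$ and rate $m_i(\lambda)u/(3n)$, and the \emph{downgrade} moves $(w,\lambda)\to(w,\lambda')$ with $\lambda \gtrdot_i \lambda'$ and rate $m_i(\lambda)/(3n)$ — and both of these leave $w$ unchanged. Consequently, when $\lambda_1 \ne \lambda_0$, every nonzero summand above has $w = w_0$, so the sum collapses to the single term $Q\big((w_0,\lambda_0),(w_0,\lambda_1)\big)$. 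Note that, unlike in \cref{thm:relationship}, there is no internal multiplicity left to sum over: the colored Boolean process already encodes the ``$m_i(\lambda_0)$ choices of which part to move'' as a single edge carrying the appropriate aggregated rate.

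It then remains to compare this single term with $d^{(n)}_{\lambda_0,\lambda_1}$, splitting on the relation between $\lambda_0$ and $\lambda_1$. A transition between two fixed partitions adds or removes exactly one box, so at most one of $\lambda_0 \lessdot_i \lambda_1$ and $\lambda_0 \gtrdot_i \lambda_1$ can hold, and the index $i$ is then uniquely determined. If $\lambda_0 \lessdot_i \lambda_1$, both sides equal $m_i(\lambda_0)u/(3n)$; if $\lambda_0 \gtrdot_i \lambda_1$, both equal $m_i(\lambda_0)/(3n)$; otherwise both are $0$. In every case the value depends only on $\lambda_0$ and $\lambda_1$, which is exactly the condition of \cref{def:lumping}; and since $(w_0,\lambda_1) \in \Omega^{p,q}_n$ precisely when $\lambda_1 \in \chi^{p,q}$, the matrices $Q$ and $d^{(n)}$ are being compared on compatible state spaces. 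This is a direct bookkeeping argument with no genuine obstacle; the only thing requiring care is keeping the conventions $\lessdot_i$ versus $\gtrdot_i$ and the weights $m_i(\lambda_0)$ aligned with \cref{def:cBp}, together with the observation just made that a one-box move between fixed partitions pins down the index $i$.
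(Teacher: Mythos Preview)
Your proof is correct and follows essentially the same approach as the paper: reduce to the case $\lambda_0\neq\lambda_1$, observe from \cref{def:cBp} that partition-changing transitions of the colored Boolean process fix $w$, so the sum over the fiber collapses to the single term $Q((w_0,\lambda_0),(w_0,\lambda_1))$, and match this against $d^{(n)}_{\lambda_0,\lambda_1}$ case by case. The paper's version is terser but structurally identical.
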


\begin{proof}
    By \cref{def:lumping}, we need to show that for any $\nu, \lambda$ and binary word $w$ the following equation holds:
    $$d^{(n)}_{\nu,\lambda} = \sum_{w'} Q((w,\nu),(w',\lambda)).$$

    It suffices to check this for all $\nu \neq \lambda$.
    Then $Q((w,\nu),(w',\lambda)) \neq 0$ only if $w = w'$ by \cref{def:cBp}, and this quantity is either $\frac{m_i(\nu) u}{3n}$ when $\nu \lessdot_i \lambda$ or $\frac{m_i(\nu)}{3n}$ when $\nu \gtrdot_i \lambda$.
\end{proof}

\begin{coro}
    The steady state probabilities $\{\pi_{\RRG}(\lambda)| \lambda \in \chi^{p,q} \}$ of the restricted random growth and the unnormalized steady state probabilities $\{\pi_{\CBP}(w,\lambda) |(w, \lambda) \in \Omega^{p,q}_n \}$ of the colored Boolean process are related as follows:
    $$\pi_{\RRG}(\lambda) \propto \sum_{w \in S_n(1^q 0^{n-q})} \pi_{\CBP}(w,\lambda).$$
\end{coro}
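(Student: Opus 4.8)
The plan is to deduce this corollary directly from \cref{prop:lumping} together with \cref{2ndlumping}, in exact parallel with how the corollary following \cref{thm:relationship} was obtained from \cref{prop:lumping} and \cref{thm:relationship}.

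First I would record that the fiber of the projection $g\colon \Omega^{p,q}_n \to \chi^{p,q}$ over a partition $\lambda \in \chi^{p,q}$ is exactly
$$g^{-1}(\lambda) = \{(w,\lambda) : w \in S_n(1^q 0^{n-q})\}.$$
By \cref{2ndlumping}, $g$ is a lumping of the colored Boolean process onto the restricted random growth model. Apply \cref{prop:lumping} to the chain $\Omega^{p,q}_n$ with its stationary distribution $\pi_{\CBP}$ and the surjection $g$: the pushforward measure $\pi$ on $\chi^{p,q}$ given by
$$\pi(\lambda) = \sum_{(w',\lambda')\,:\, g(w',\lambda') = \lambda} \pi_{\CBP}(w',\lambda') = \sum_{w \in S_n(1^q 0^{n-q})} \pi_{\CBP}(w,\lambda)$$
is then a stationary distribution for the lumped chain, i.e. for the restricted random growth model with transition probabilities $d^{(n)}$.

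Next I would observe that the restricted random growth model on $\chi^{p,q}$ is ergodic: it is irreducible because from any partition one reaches the minimal partition $\langle 1^q\rangle$ by repeatedly removing a box from a row of size at least $2$, and each such move is reversible, so all states communicate; it is aperiodic because the holding probability $d^{(n)}_{\lambda,\lambda}$ is strictly positive for every $\lambda$. Hence its stationary distribution is unique up to a positive scalar, and since $\pi_{\RRG}$ is by definition an unnormalized version of this stationary distribution, we conclude
$$\pi_{\RRG}(\lambda) \propto \sum_{w \in S_n(1^q 0^{n-q})} \pi_{\CBP}(w,\lambda),$$
as claimed.

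The argument is essentially formal once \cref{2ndlumping} is available; the only step not already black-boxed by the lumping machinery is the (routine) verification of ergodicity of the restricted random growth model, which I expect to be the sole, and very minor, obstacle.
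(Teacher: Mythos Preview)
Your proposal is correct and follows exactly the approach the paper intends: the corollary is stated without proof in the paper precisely because it is immediate from \cref{prop:lumping} applied via \cref{2ndlumping}, in parallel with the analogous corollary after \cref{thm:relationship}. Your added verification of ergodicity (irreducibility and positive holding probabilities) is a welcome elaboration of a step the paper leaves implicit.
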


\begin{thm}\label{thm:multinomial}
The steady state probabilities $\{\pi_{\RRG}(\lambda)|\lambda \in \chi^{p,q} \}$ of the restricted random growth model satisfy the following relations for all partitions $\mu, \lambda \in \chi^{p,q}$:
    $$\frac{\pi_{\RRG}(\lambda)}{\pi_{\RRG}(\mu)} = \frac{|S_n(\lambda)|}{|S_n(\mu)|} u^{|\lambda| - |\mu|}.$$
\end{thm}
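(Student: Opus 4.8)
The plan is to read this off from \cref{thm:equal} together with the corollary to \cref{2ndlumping}; essentially no new computation is required, because all of the substantive work already happened at the level of the colored Boolean process. By that corollary there is a constant $C$, independent of $\lambda$, with $\pi_{\RRG}(\lambda) = C\sum_{w \in S_n(1^q 0^{n-q})} \pi_{\CBP}(w,\lambda)$; this uses only that the restricted random growth model is ergodic (finite, irreducible, with self-loops), hence has a stationary distribution unique up to scaling, combined with \cref{prop:lumping} applied to the lumping of \cref{2ndlumping}.

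Next I would substitute the explicit formula of \cref{thm:equal}. Part (2) gives $\pi_{\CBP}(w,\lambda) = u^{|\lambda|-q}\binom{q}{m_1(\lambda),\dots,m_p(\lambda)}\,\pi_{\CBP}(w,1^q0^{n-q})$ for every $w$, and part (1) says $\pi_{\CBP}(w,1^q0^{n-q})$ is a single constant $c$ not depending on $w$. Summing over the $\binom{n}{q}$ binary words $w$ therefore yields
$$\pi_{\RRG}(\lambda) = Cc\binom{n}{q}\,u^{|\lambda|-q}\binom{q}{m_1(\lambda),\dots,m_p(\lambda)}.$$
For any $\mu \in \chi^{p,q}$ we have $m_1(\lambda)+\cdots+m_p(\lambda) = q = m_1(\mu)+\cdots+m_p(\mu)$, so the common factor $Cc\binom{n}{q}u^{-q}$ cancels in the ratio and
$$\frac{\pi_{\RRG}(\lambda)}{\pi_{\RRG}(\mu)} = u^{|\lambda|-|\mu|}\,\frac{\binom{q}{m_1(\lambda),\dots,m_p(\lambda)}}{\binom{q}{m_1(\mu),\dots,m_p(\mu)}}.$$
Finally I would invoke the elementary identity $\binom{n}{n-q,m_1,\dots,m_p} = \binom{n}{q}\binom{q}{m_1,\dots,m_p}$, valid since $m_1+\cdots+m_p = q$, together with the formula $|S_n(\lambda)| = \binom{n}{n-q,m_1(\lambda),\dots,m_p(\lambda)}$ recorded before \cref{thm:main}, to rewrite the multinomial ratio as $|S_n(\lambda)|/|S_n(\mu)|$, which is exactly the claim.

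Since the only real content sits in \cref{thm:equal}, I do not expect a genuine obstacle; the one thing to handle carefully is bookkeeping, namely that the sum over $w$ collapses to the constant $\binom{n}{q}c$ and that the powers of $u$ combine cleanly to $u^{|\lambda|-|\mu|}$. If instead one wanted an argument internal to the restricted random growth model, one could verify directly that the ratios in the statement satisfy its balance equations: at a partition $\lambda$ this is precisely the computation in the proof of \cref{thm:equal} with the binary word suppressed, matching term by term over each part size $i$, with the $\lessdot_i$ transitions carrying the extra factor $u$ and the $\gtrdot_i$ transitions not, and ergodicity again supplying uniqueness. That route merely repeats work already done, so the first one is preferable.
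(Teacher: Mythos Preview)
Your proposal is correct and follows exactly the paper's approach: the paper's proof is the single sentence ``This follows from \cref{2ndlumping} and \cref{thm:equal} and a computation on multinomial coefficients,'' and you have faithfully spelled out that computation. Your alternative route of verifying the balance equations directly would also work, as you note, but is redundant with the proof of \cref{thm:equal}.
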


\begin{proof}
    This follows from \cref{2ndlumping} and \cref{thm:equal} and a computation on multinomial coefficients.
\end{proof}

\begin{proof}[Proof of \cref{cor:restricted}]
    This follows from \cref{prop:lumping}, \cref{2ndlumping} and \cref{thm:multinomial}.
\end{proof}

\section{The stationary distribution of DASEP(n,2,2)}\label{n22section}

If there were only one species of particle, i.e. $p=1$, the stationary distribution of $\DASEP(n,1,q)$ is uniform.
If there were only one particle, i.e., $q=1$, then the unnormalized steady state probabilities of $\DASEP(n,p,1)$ are given by powers of $u$, not involving $t$ due to cyclic symmetry.
We now study the first nontrivial case of $\DASEP(n,p,q)$ in more detail, namely $\DASEP(n,2,2)$.
In this section, we give a complete description of the stationary distributions when there are two particles and two species, while the number of sites can be arbitrary.

\begin{thm}\label{thm:n22}
    Let $(a_k)_{k\geq0}$ and $(b_k)_{k\geq-1}$ be polynomial sequences in $u,t$ satisfying the recurrence relation
    \begin{align}
        \label{recurrence1} a_k &= (u+2t+3)a_{k-1}-(t+1)^2a_{k-2} \\
        \label{recurrence2} b_k &= (u+2t+3)b_{k-1}-(t+1)^2b_{k-2}.
    \end{align}
    with initial conditions $b_{-1}=0, a_0 = b_0 = 1, a_1 = u+3t+4$.

    We can fully describe the unnormalized steady state probabilities of the infinite family $\DASEP(n,2,2)$ as follows.
    When $n=2k+1$ is odd,
    $$
    \begin{array}{|c|c|c|}
    \hline
    \mu & \pi_{\DASEP}(\mu) \\ \hline
    S_n((1,1,0,\dots,0)) & a_k \\ \hline
    0\dots010^m 20\dots0 & ua_k + u(t-1)(t+1)^m a_{k-m-1}, (0\leq m<k) \\ \hline
    0\dots020^m 10\dots0 & ua_k - u(t-1)(t+1)^m a_{k-m-1}, (0\leq m<k) \\ \hline
    S_n((2,2,0,\dots,0)) & u^2 a_k \\ \hline
    \end{array}
    $$
    When $n=2k+2$ is even,
    $$\begin{array}{|c|c|c|}
    \hline
    \mu & \pi_{\DASEP}(\mu) \\ \hline
    S_n((1,1,0,\dots,0)) & b_k \\ \hline
    0\dots010^m 20\dots0 & ub_k + u(t-1)(t+1)^m b_{k-m-1}, (0\leq m \leq k) \\ \hline
    0\dots020^m 10\dots0 & ub_k - u(t-1)(t+1)^m b_{k-m-1}, (0\leq m \leq k) \\ \hline
    S_n((2,2,0,\dots,0)) & u^2 b_k \\ \hline
    \end{array}$$
\end{thm}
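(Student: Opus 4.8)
The plan is to verify that the claimed formulas satisfy the balance equations for $\DASEP(n,2,2)$, or rather, by the two lumping theorems (\cref{thm:relationship} and \cref{2ndlumping}), to reduce everything to a computation inside the colored Boolean process $\Omega^{2,2}_n$. By \cref{thm:equal}, once we know $\pi_{\CBP}(w, (1,1,0,\dots,0))$ for every binary word $w \in S_n(1^2 0^{n-2})$, the probabilities $\pi_{\CBP}(w,(2,1,0,\dots,0))$ and $\pi_{\CBP}(w,(2,2,0,\dots,0))$ are determined, and then $\pi_{\DASEP}(\mu)$ for $\mu \in S_n^w(\lambda)$ follows from \cref{thm:main}. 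So the real content is: (i) compute the stationary distribution of the colored Boolean process restricted to the ``$\lambda = (1,1,0,\dots,0)$ layer,'' which is just a Markov chain on the binary words $S_n(1^2 0^{n-2})$ with $01\leftrightarrow 10$ moves (this is the two-particle TASEP-like ring Boolean walk), weighted by the self-loop contributions coming from the species-changing moves; and (ii) handle the splitting between $0\cdots 1 0^m 2 0\cdots$ and $0\cdots 2 0^m 1 0\cdots$, which is where the $\pm u(t-1)(t+1)^m a_{k-m-1}$ correction terms and the parity of $n$ enter.

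For step (i), I would parametrize a binary word with exactly two $1$'s on the $n$-cycle by the (cyclic) distances between the two particles; up to the cyclic symmetry of \cref{remark:cyclic}, the state is determined by the gap $m \in \{0,1,\dots,\lfloor (n-2)/2\rfloor\}$ (with a parity subtlety when the two gaps are equal). Writing the balance equation \eqref{balance} from the proof of \cref{thm:equal} in these coordinates turns the stationarity condition into a linear recurrence in $m$ with characteristic polynomial whose roots govern the $(t+1)^m$ factors, and the global constants $a_k, b_k$ satisfying \eqref{recurrence1}--\eqref{recurrence2} appear as the normalizing solutions that also satisfy the ``boundary'' equations at $m = 0$ (particles adjacent) and at the far end $m = k$ (particles antipodal, or nearly so). The even/odd dichotomy is exactly the distinction between the antipodal case existing as a genuine single orbit ($n$ even) versus not ($n$ odd), which changes the boundary condition at the top of the chain and hence forces two different sequences $a_k$ and $b_k$ with the same recurrence but shifted index range; I would set up both boundary value problems explicitly and check the proposed closed forms solve them.

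The one genuinely delicate step — and the main obstacle — is step (ii): the species-asymmetric part. The formulas assert that the ``$1$ then $2$'' configuration and the ``$2$ then $1$'' configuration (with the $1$ and $2$ separated by $m$ zeros in a fixed direction) have probabilities $ua_k \pm u(t-1)(t+1)^m a_{k-m-1}$, whose \emph{sum} is $2ua_k$, consistent with \cref{thm:main}, but whose \emph{difference} is a new quantity not controlled by the earlier lumping results. To pin down this difference I would go back to the full DASEP balance equations (not the lumped ones) for the states $0\cdots 1 0^m 2 0\cdots$ and $0\cdots 2 0^m 1 0\cdots$: the hopping moves that swap the relative order of a $1$ and a $2$ (rate $t/(3n)$ one way, $1/(3n)$ the other) couple these two families to each other and to the $m\mp 1$ neighbors, producing an antisymmetric linear system whose solution is precisely the geometric-in-$m$ correction term with ratio $(t+1)$ and overall factor $(t-1)$. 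I expect the bookkeeping here — correctly accounting for wrap-around, for the two ends $m=0$ and $m=k$, and for the interaction with the species-upgrade/downgrade moves at rates $u/(3n)$ and $1/(3n)$ — to be the crux; everything else is substituting the closed forms into finitely many linear identities in $u,t$ and using the recurrences \eqref{recurrence1}--\eqref{recurrence2} together with the initial conditions $b_{-1}=0$, $a_0=b_0=1$, $a_1 = u+3t+4$ to collapse them. Finally I would note that ergodicity gives uniqueness, so verifying the balance equations suffices, and record the $u=t=1$ specialization as a consistency check against the OEIS sequences mentioned in the introduction.
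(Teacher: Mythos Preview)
Your step (ii) is essentially the whole proof; step (i) as you describe it is based on a misconception. By \cref{thm:equal}, the colored Boolean process on the layer $\lambda=(1,1,0,\dots,0)$ is \emph{already} trivial: all $\pi_{\CBP}(w,(1,1,0,\dots,0))$ are equal, so no gap-indexed recurrence arises there, and nothing in that layer singles out $a_k$ or $b_k$. The normalizing polynomial $a_k$ (resp.\ $b_k$) is not determined by the CBP at all; it is forced only when you write the full DASEP balance equations at the $(2,1)$-states, where the $1$-before-$2$ and $2$-before-$1$ configurations are distinguished. In other words, the ``boundary value problem in $m$'' you expect in step (i) actually lives entirely in step (ii).

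The paper's proof does exactly this, and by direct verification rather than derivation. Setting $p_w=x$ for all binary words, $q^\pm_m$ for the two $(2,1)$-families, and $u^2x$ for the $(2,2)$-states (all justified by \cref{thm:main}), the DASEP balance equations reduce to a short list: two that become trivial once you use $q^+_m+q^-_m=2ux$, a bulk equation $(u+2t+3)q^-_m=(t+1)(q^-_{m+1}+q^-_{m-1})+(u+u^2)x$, an adjacency equation at $m=0$, and one top equation depending on parity. Substituting the claimed closed forms and subtracting $ux$ turns the bulk equation into exactly the recurrence \eqref{recurrence1} (with $k-m-1$ as the running index), the $m=0$ equation into the same recurrence at top index, and the parity-dependent top equation into the initial condition $a_1=u+3t+4$ (odd case) or a tautology via $b_{-1}=0$ (even case). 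So your instinct that the antisymmetric system has ratio $(t+1)$ and prefactor $(t-1)$ is right, but the $k$-recurrence for $a_k,b_k$ and the $m$-recurrence for $q^\pm_m$ are the \emph{same} recurrence, not two separate computations, and neither comes from the CBP binary-word layer.
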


\begin{proof}
We will denote $\pi_{\DASEP}(\mu)$ by $p_\mu$ for simplicity in the proof.
We need to show that these formulae satisfy balance equation at each state. Let $p_w$ for any binary word $w$ be $x$, then $p_\mu$ for $\mu \in S_n((2,2,0,\dots,0))$ would be equal to $u^2x$ by \cref{thm:equal}. Let $p_{\dots 1 0^m 2\dots}$ be $q^+_m$ and $p_{\dots20^m1\dots}$ be $q^-_m$ for $m \in [0,k]$ or $[0,k)$. The following equations hold.
    \begin{align}
        \label{triv1} (2u+t+1)x &= (t+1)x + q^+_1 + q^-_1 \\
        \label{eq1} (u+2t+2)q^-_0 &= (t+1)q^-_1 + (u+u^2)x + q^+_0 \\
        \label{eq2} (u+2t+3)q^-_m &= (t+1) q^-_{m+1} + (u+u^2)x + (t+1) q^-_{m-1} \\
        \label{triv2} (t+3)u^2x &= (t+1)u^2x + u(q^+_0 + q^-_0)
    \end{align}
    When $n$ is odd, \cref{eq2} is true for $m \in [1,k-2]$, and we have an additional equation
    \begin{align}
        \label{oddinit} (u+2t+3)q^-_{k-1} &= (t+1) (q^-_{k-2}+q^+_{k-1}) + (u+u^2)x.
    \end{align}
    When $n$ is even, \cref{eq2} is true for $m \in [1,k-1]$, and we have an additional equation
    \begin{align}
        \label{eventop} (u+2t+3)q^-_k &= (t+1) (q^-_{k-1}+q^+_{k-1}) + (u+u^2)x.
    \end{align}
    
    By \cref{thm:main}, we have $q^+_i + q^-_i = 2ux$ for all $i$. Applying this to \cref{triv1}, \cref{triv2}, and \cref{eventop}, we see that all of them hold trivially.

    To show \cref{eq1}, we may assume $n$ is odd, since the even case would be the same. Deduce $u(u+t+3)x$ from both sides then divide both sides by $u(t-1)$, we are left with
    $$x - (u+2t+2)a_{k-1} = - (t+1)^2 a_{k-2} + a_{k-1}$$
    which is the same as 
    $$x = (u+2t+3) a_{k-1} - (t+1)^2 a_{k-2}. $$
    Since $x = a_k$, this is true by \cref{recurrence1}.

    To show \cref{eq2}, we rewrite the recurrence relations \cref{recurrence1} and \cref{recurrence2} into
    $$(t+1)(q^-_{m-1}-ux) = (u+2t+3)(q^-_m-ux) - (t+1) (q^-_{m+1}-ux),$$
    eliminating $x$ out of the equations, hence proving \cref{eq2}.

    To show \cref{oddinit}, we deduce $u(u+2t+3)x$ from both sides then divide both sides by $u(t-1)(t+1)^{m-1}$ so that we are left with
    $$-(u+2t+3) = -a_1 + t+1$$
    which is true since $a_1 = u+3t+4$.
\end{proof}

\begin{figure}
    \centering
    \includegraphics{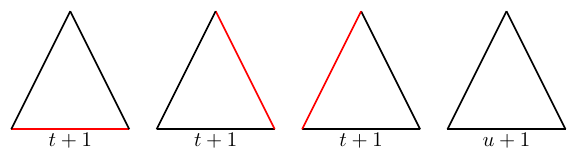}
    \caption{$a_1 = u+3t+4$}
    \label{C3}
\end{figure}

\begin{figure}
    \centering
    \includegraphics{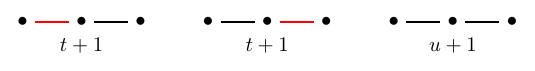}
    \caption{$b_1 = u+2t+3$}
    \label{L3}
\end{figure}

\begin{rema}
A matching in a graph is a set of pairwise non-adjacent edges.
Consider matchings $M$ in the cycle graph $C_{2k+1}$ or line graph $L_{2k+1}$ with $(2k+1)$ vertices. Assign each matching $M$ a weight of $(t+1)^{|M|}(u+1)^{k-|M|}$. 
Then $a_k$ is the sum of weights over all matchings of the cycle $C_{2k+1}$, i.e.,
\begin{equation}\label{eq:ak}
    a_k = \sum_{M: C_{2k+1}} (t+1)^{|M|}(u+1)^{k-|M|}
\end{equation}
and $b_k$ is that of the line $L_{2k+1}$, i.e.
\begin{equation}\label{eq:bk}
    b_k = \sum_{M: L_{2k+1}} (t+1)^{|M|}(u+1)^{k-|M|}.
\end{equation}
These can be seen via induction. The base cases can be seen in \cref{C3}, and \cref{L3} which equal to the first rows of \cref{tab:3&4}.
\end{rema}
\begin{proof}
We prove that the right hand side of ~\cref{eq:ak} satisfies the recurrence relation ~\cref{recurrence1} and the right hand side of ~\cref{eq:bk} satisfies ~\cref{recurrence2}.

    For the line graph $L_{2k+1}$, we label the edges by $[2k]:=\{1,2,\dots,2k\}$ from left to right. Take a matching $M$ in the first $[2k-2]$ edges (or, in the $L_{2k-1}$ subgraph), then $M, M\cup \{2k\}$ are both matchings in $L_{2k+1}$. However, $M \cup \{2k-1\}$ is only a matching if $2k-2 \notin M$, and if $2k-2 \in M$, then $M \setminus \{2k-2\}$ is a matching in $[2k-4]$ (or, on the $L_{2k-3}$ subgraph) because $2k-2 \in M$ implies that $2k-3 \notin M$. Therefore, $[(u+1)+2(t+1)] b_{k-1}$ counts $M, M\cup\{2k-1\},M\cup\{2k\}$ for all matchings $M$ in $L_{2k-1}$, and $(t+1)^2 b_{k-2}$ counts the those set of edges given by $M\cup\{2k-1\}$ that are not matchings.

    For the cycle graph $C_{2k+1}$, we label the edges by $[2k+1]$. The argument is very similar, but we have to subtract another copy of $(t+1)^2 a_{k-2}$ which comes from the possible non-matchings given by $M\cup \{2k+1\}$. However, if we take a matching $N$ on $\{1,\dots,2k-3\}$, then $N \cup \{2k-1, 2k+1\}$ is a matching in $C_{2k+1}$, which is counted by $(t+1)^2 a_{k-2}$ and added back.
\end{proof}

\section{Homomesy}
\cref{thm:main} can be viewed as a statement about taking average of some statistic over the orbit of a group acting on the particles, which is an instance of a phenomenon called \emph{homomesy} by Propp and Roby \cite{ProppRoby}. This phenomenon was first noticed by Panyushev \cite{dmitri} in 2007 in the context of the rowmotion action on the set of antichains of a root poset; Armstrong, Stump, and Thomas \cite{nestcross} proved Panyushev’s conjecture in 2011. 

\begin{defi}[\cite{ProppRoby}]
    Given a set $S$, an invertible map $\tau: S \to S$ such that each $\tau$-orbit is finite, and a function (or ``statistic") $f: S \to K$ taking values in some field $K$ of characteristic zero, we say the triple $(S, \tau, K)$ exhibits \emph{homomesy} if there exists a constant $c \in K$ such that for every $\tau$-orbit $O \subset S$
    $$\frac{1}{\# O} \sum_{x \in O} f(x) = c.$$
    In this situation we say that $f$ is \emph{homomesic} under the action of $\tau$ on $S$, or more specifically $c$-mesic.
\end{defi}

Although the original definition concerns the action of the cyclic group generated by a single map $\tau$, this can be generalized to the action of any finite group, as pointed out in \cite[Section 2.1]{roby2016}. We can also generalize the definition such that the statistic $f$ takes value in a ring of polynomials (or even Laurent polynomials) over some field of characteristic zero, and the average of this statistic is equal up to a monic monomial. We make the generalized definition more precise as follows.

\begin{defi}
    Given a set $S$, a finite group $G$ acting on $S$ with finite orbits, and a function $f: S \to R = K[x_1,\dots,x_n]$ for a field $K$ of characteristic zero, we say the triple $(S, G, R)$ exhibits \emph{homomesy} if there exists a polynomial $c \in R$ such that for every $G$-orbit $O \subset S$, there exist nonnegative integers $e^O_i \in \Z_{\ge 0}$ for $i=1,\dots,n$ such that
    $$\frac{1}{\# O} \sum_{x \in O} f(x) = \prod x_i^{e^O_i} c.$$
\end{defi}

\begin{coro}
    Let the symmetric group $S_q$ acts on the states of $\DASEP(n,p,q)$ by permuting the $q$ positive parts (the particles). Then the triple $(\Gamma^{p,q}_n, S_q, \Q[u,t])$ exhibits homomesy with statistic $\pi_{\DASEP}$ in the more general sense defined above.
\end{coro}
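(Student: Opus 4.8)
The plan is to show that for each $S_q$-orbit $O$ on $\Gamma^{p,q}_n$, the orbit sum $\frac{1}{\#O}\sum_{\mu\in O}\pi_{\DASEP}(\mu)$ is a fixed polynomial $c\in\Q[u,t]$ times a monic monomial in $u$ (with no $t$-factor), so that in the generalized homomesy definition we take $n=1$, $x_1=u$, and $c=\pi_{\DASEP}(w)$ for any binary word $w$. First I would identify the orbits explicitly. The group $S_q$ permutes the $q$ positive entries of a state $\mu\in\Gamma^{p,q}_n$ among themselves, leaving the $0$-positions fixed; hence two states lie in the same orbit if and only if they have the same underlying binary word $w$ (same positions of zeros) and the same multiset of positive values, i.e.\ the same partition $\lambda$. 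In other words, the $S_q$-orbits are exactly the sets $S_n^w(\lambda)$ for $w\in S_n(1^q0^{n-q})$ and $\lambda$ with $\lambda_1\le p$, $\ell(\lambda)=q$. This is the crucial structural observation and it is essentially immediate from the definition of $S_n^w(\lambda)$ given in the paper.

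Next I would invoke \cref{thm:main}(2), which states
$$\sum_{\mu\in S_n^w(\lambda)}\pi_{\DASEP}(\mu)=u^{|\lambda|-q}\binom{q}{m_1,\dots,m_p}\,\pi_{\DASEP}(w),$$
and note that $|S_n^w(\lambda)|=\binom{q}{m_1,\dots,m_p}=\#O$ where $O=S_n^w(\lambda)$. Dividing, the orbit average is
$$\frac{1}{\#O}\sum_{\mu\in O}\pi_{\DASEP}(\mu)=u^{|\lambda|-q}\,\pi_{\DASEP}(w).$$
By \cref{thm:main}(1), $\pi_{\DASEP}(w)=\pi_{\DASEP}(w')$ for all binary words $w,w'$, so this common value is a single polynomial $c:=\pi_{\DASEP}(w)\in\Q[u,t]$ independent of the orbit. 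Setting $e^O_1:=|\lambda|-q\in\Z_{\ge0}$ (nonnegative since $|\lambda|\ge\ell(\lambda)=q$), we get exactly the required form $\frac{1}{\#O}\sum_{x\in O}f(x)=u^{e^O_1}c$.

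The only points needing care, rather than a genuine obstacle, are bookkeeping ones: confirming that $S_q$ acts on $\Gamma^{p,q}_n$ as claimed (it permutes the positive parts, so it does not leave the state space, because the resulting rearranged word may not be weakly decreasing — but $\Gamma^{p,q}_n$ is a set of \emph{permutations} of partitions, not partitions, so every rearrangement is still a valid state), that all orbits are finite (they are, with size dividing $q!$), and that the value field $\Q[u,t]$ in the statement is used in the generalized (polynomial-valued) sense with a single variable $u$ carrying the monomial. No new computation beyond what \cref{thm:main} already supplies is required; the corollary is genuinely just a repackaging of \cref{thm:main} in the language of homomesy.
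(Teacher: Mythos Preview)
Your proof is correct and matches the paper's own argument essentially line for line: identify the $S_q$-orbits as the sets $S_n^w(\lambda)$, then apply \cref{thm:main}(1)--(2) to obtain the orbit average $u^{|\lambda|-q}c$ with $c=\pi_{\DASEP}(w)$ independent of $w$. The only cosmetic nitpick is that since the ring in the statement is $R=\Q[u,t]$ with two variables, you should formally set the exponent on $t$ to zero rather than asserting ``we take $n=1$, $x_1=u$'', but this does not affect the substance.
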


\begin{proof}
    The orbits of $S_q$ action on $\Gamma^{p,q}_n$ are $S_n^w(\lambda)$ for $(w,\lambda) \in \Omega^{p,q}_n$. The orbit size is 1 when $\lambda = \langle 1^q 0^{n-q} \rangle$. Let $c = \pi_{\DASEP}(w)$ for any binary word $w \in S_n(1^q 0^{n-q})$ and $c$ does not depend on the choice of binary word $w$ by (1) of \cref{thm:main}. 
    By (2) of \cref{thm:main}, we have
    $$\frac{1}{|S_n^w(\lambda)|} \sum_{\mu \in S_n^w(\lambda)} \pi_{\DASEP} = u^{|\lambda|-q} c.$$
\end{proof}

\begin{coro}
    Let the symmetric group $S_n$ acts on the states of $\DASEP(n,p,q)$ by permuting the $n$ sites. Then the triple $(\Gamma^{p,q}_n, S_n, \Q[u,t])$ exhibits homomesy with statistic $\pi_{\DASEP}$. 
\end{coro}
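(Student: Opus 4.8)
The plan is to reduce the $S_n$-statement to the $S_q$-statement (the previous corollary) by carefully analyzing what the $S_n$-orbits look like. The $S_n$-action permutes the $n$ sites, so it acts on a state $\mu \in \Gamma^{p,q}_n$ by permuting its entries; thus the $S_n$-orbit of $\mu$ is exactly $S_n(\lambda)$, where $\lambda$ is the partition underlying $\mu$ (i.e.\ the sorted tuple of its entries). So the orbits of the $S_n$-action are precisely the sets $S_n(\lambda)$ for $\lambda$ ranging over partitions with $\lambda_1 \le p$ and $\ell(\lambda) = q$, and each such orbit has size $|S_n(\lambda)| = \binom{n}{n-q, m_1, \dots, m_p}$.

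The key computation is then the orbit average $\frac{1}{|S_n(\lambda)|} \sum_{\nu \in S_n(\lambda)} \pi_{\DASEP}(\nu)$. First I would decompose $S_n(\lambda) = \bigsqcup_{w \in S_n(1^q 0^{n-q})} S_n^w(\lambda)$ according to the positions of the zeros, and apply part (2) of \cref{thm:main} to each block to get $\sum_{\nu \in S_n^w(\lambda)} \pi_{\DASEP}(\nu) = u^{|\lambda|-q}\binom{q}{m_1,\dots,m_p}\pi_{\DASEP}(w)$. Summing over all $w$ and using part (1) of \cref{thm:main} (all binary words have equal steady-state probability, equal to the constant $c$ from the previous corollary), this gives
\begin{align*}
    \sum_{\nu \in S_n(\lambda)} \pi_{\DASEP}(\nu) = u^{|\lambda|-q}\binom{q}{m_1,\dots,m_p}\binom{n}{q} c.
\end{align*}
Dividing by $|S_n(\lambda)| = \binom{n}{q}\binom{q}{m_1,\dots,m_p}$ — using $\binom{n}{n-q,m_1,\dots,m_p} = \binom{n}{q}\binom{q}{m_1,\dots,m_p}$ — the binomial factors cancel cleanly and the orbit average is $u^{|\lambda|-q} c$, which is a monic monomial times the fixed polynomial $c \in \Q[u,t]$, independent of the orbit. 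This verifies the generalized homomesy condition with the same constant $c$.

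Since the underlying case ($q=1$, $\lambda = 1^q 0^{n-q}$) gives an orbit $S_n(1^q 0^{n-q})$ of size $\binom{n}{q}$ on which $\pi_{\DASEP}$ is constant equal to $c$ by part (1) of \cref{thm:main}, the exponent vector for that orbit is zero and everything is consistent. I expect no real obstacle here: the content is entirely packaged in \cref{thm:main}, and the only thing to be careful about is the identification of the $S_n$-orbits with the sets $S_n(\lambda)$ and the bookkeeping of the multinomial-coefficient cancellation $|S_n(\lambda)| = \binom{n}{q}|S_n^w(\lambda)|$. The mild subtlety worth a sentence is that the $S_n$-action is genuinely transitive on each $S_n(\lambda)$ (any two rearrangements of $\lambda$ differ by a permutation of positions), so there is exactly one orbit per partition and the average above is indeed the full orbit average.
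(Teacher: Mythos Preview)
Your argument is correct and arrives at exactly the same orbit-average formula $u^{|\lambda|-q}c$ as the paper. The only difference in packaging is that the paper invokes \cref{cor:restricted} (with $\mu=\langle 1^q0^{n-q}\rangle$) to get the ratio directly, whereas you unfold that corollary inline: you decompose $S_n(\lambda)=\bigsqcup_w S_n^w(\lambda)$, apply \cref{thm:main}(2) to each block, sum over $w$ using \cref{thm:main}(1), and then cancel the multinomial factors via $|S_n(\lambda)|=\binom{n}{q}\binom{q}{m_1,\dots,m_p}$. This is precisely the content behind \cref{cor:restricted}, so the two proofs are really the same computation at different levels of citation; your route has the small advantage of not needing the second lumping to the restricted random growth model at all.

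One cosmetic slip: in your last paragraph ``the underlying case ($q=1$, $\lambda = 1^q 0^{n-q}$)'' the ``$q=1$'' is a typo---you mean the base orbit $\lambda=\langle 1^q0^{n-q}\rangle$, not that $q$ equals $1$.
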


\begin{proof}
    The orbit of $S_n$ action are $S_n(\lambda)$ for $\lambda \in \chi^{p,q}$. Let $c = \pi_{\DASEP}(w)$ for any binary word $w$, and $c$ does not depend on the choice of $w$ by (1) of \cref{thm:main}. Then for $\lambda = \langle 1^q 0^{n-q} \rangle$, we have
    $$\frac{1}{|S_n(1^q 0^{n-q})|} \sum_{w} \pi_{\DASEP}(w) = c$$
    by definition of $c$. Taking $\mu = \langle 1^q 0^{n-q} \rangle$, by \cref{cor:restricted}, we have 
    $$\frac{1}{|S_n(\lambda)|} \sum_{\nu \in S_n(\lambda)} \pi_{\DASEP}(\nu) = u^{|\lambda|-q} c.$$
\end{proof}

\printbibliography

\end{document}